\title{\textbf{Spaces of maps between real algebraic varieties}}
\author{Wojciech Kucharz}
\date{}
\theoremstyle{plain}
\newtheorem{theorem}{Theorem}[section]
\newtheorem{lemma}[theorem]{Lemma}
\newtheorem{corollary}[theorem]{Corollary}
\theoremstyle{definition}
\newtheorem{definition}[theorem]{Definition}
\newtheorem{example}[theorem]{Example}
\newcounter{thmlist}
\newcommand{\R}{\mathbb{R}}
\newcommand{\e}{\varepsilon}
\newcommand{\C}{\mathcal{C}}
\begin{document}

\maketitle

\paragraph{Abstract.} Given two real algebraic varieties $X$ and $Y$, we denote by $\mathcal{R}(X,Y)$ the set of all regular maps from $X$ to $Y$. The set $\mathcal{R}(X,Y)$ is regarded as a topological subspace of the space $\C(X,Y)$ of all continuous maps from $X$ to $Y$ endowed with the compact-open topology. We prove, in a much more general setting than previously considered, that each path component of $\C(X,Y)$ contains at most one path component of $\mathcal{R}(X,Y)$, and for every positive integer $k$ the inclusion map $\mathcal{R}(X,Y)\hookrightarrow\C(X,Y)$ induces an isomorphism between the $k$th homotopy groups of the corresponding path components. We also identify several cases where this inclusion map is a weak homotopy equivalence.

\paragraph{Keywords.} Real algebraic variety, regular map, continuous map, homotopy group, weak homotopy equivalence.

\paragraph{Mathematics subject classification (2020).} 14P05, 54C35, 55P10.

\section{Introduction}

Throughout this paper, we use the term \emph{real algebraic variety} to mean a ringed space with structure sheaf of $\R$-algebras of $\R$-valued functions, which is isomorphic to a Zariski locally closed subset of real projective space $\mathbb{P}^n(\R)$, for some $n$, endowed with the Zariski topology and the sheaf of regular functions. This terminology is compatible with that used in the monographs \cite{4,17} containing a detailed treatment of real algebraic geometry. Morphisms of real algebraic varieties are called \emph{regular maps}.

Recall that each real algebraic variety in the sense used here is actually \emph{affine}, i.e. isomorphic to an algebraic set in $\R^n$ for some $n$ \cite[Proposition 3.2.10 and Theorem 3.4.4]{4} or \cite[Proposition 1.3.11]{17}. This fact allows us to describe regular functions and regular maps in a simple way. To be precise, let $X\subset \R^n$ and $Y\subset \R^m$ be algebraic sets, and let $U\subset X$ be a Zariski open set. A function $\varphi:U\rightarrow\R$ is regular if and only if there exist two polynomial functions $P,Q:\R^n\rightarrow \R$ such that 
$$Q(x)\neq 0\,\textnormal{ and }\, \varphi(x)=\frac{P(x)}{Q(x)}\,\textnormal{ for all }\, x\in U$$
[4, p. 62] or \cite[p. 14]{17}. A map $f=(f_1,\ldots, f_m):U\rightarrow Y\subset\R^m$ is regular if and only if its components $f_1$,\ldots, $f_m$ are regular functions.

Every real algebraic variety is also equipped with the Euclidean topology determined by the usual metric on $\R$. Unless explicitly stated otherwise, all topological notions relating to real algebraic varieties will refer to the Euclidean topology.

Given two real algebraic varieties $X$ and $Y$, we denote by $\mathcal{R}(X,Y)$ the set of all regular maps from $X$ to $Y$. The set $\mathcal{R}(X,Y)$ is regarded as a topological subspace of the space $\C(X,Y)$ of all continuous maps from $X$ to $Y$ endowed with the compact-open topology. We will show that the topological properties of the spaces $\mathcal{R}(X,Y)$ and $\C(X,Y)$ are closely related under reasonable assumptions on the target variety $Y$.

\begin{definition}
\label{1.1}
    A real algebraic variety $Y$ is said to be \emph{uniformly retract rational} if for every point $y\in Y$ there exist a Zariski open neighborhood $V$ of $y$ in $Y$, a Zariski open set $W$ in $\R^n$ (for some $n$ depending on $y$), and two regular maps $V\rightarrow W\rightarrow V$ whose composite is the identity on $V$.
\end{definition}

This definition, introduced by Banecki \cite{2}, is a variant of the standard notion of retract rational variety that has been studied since the 1980s.

As usual, given a pointed topological space $(T,t_0)$, we write $\pi_k(T,t_0)$ to denote its $k$th homotopy group, where $k$ is an arbitrary positive integer; in addition, we denote by $\pi_0(T,t_0)$ the set of all path components of $T$.

Our main result is as follows.

\begin{theorem}
\label{1.2}
    Let $Y$ be a uniformly retract rational real algebraic variety. Let $X$ be a compact real algebaric variety and let 
    $$i:\mathcal{R}(X,Y)\hookrightarrow\C(X,Y)$$
    be the inclusion map. Then for every regular map $f\in \mathcal{R}(X,Y)$, the induced map 
    $$i_{*}:\pi_k(\mathcal{R}(X,Y),f)\rightarrow \pi_k(\C(X,Y),f)$$
    is injective for $k=0$ and a group isomorphism for $k\geq 1$.
\end{theorem}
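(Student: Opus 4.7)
My plan is to reduce Theorem \ref{1.2} to a single regularization principle and then establish it by bootstrapping from the affine case. Since $X$ is compact Hausdorff, the exponential law identifies $\mathcal{C}(Z,\mathcal{C}(X,Y))$ with $\mathcal{C}(Z\times X,Y)$; under this identification, an element of $\pi_k(\mathcal{C}(X,Y),f)$ corresponds to a continuous $H:S^k\times X\to Y$ with $H|_{\{*\}\times X}=f$, while an element of $\pi_k(\mathcal{R}(X,Y),f)$ corresponds to such an $H$ that is \emph{slicewise regular}, meaning $H(s,\cdot)\in\mathcal{R}(X,Y)$ for every $s$. Applying this to the pairs $(S^k,*)$, $(D^{k+1},S^k)$ and $([0,1],\{0,1\})$, the whole theorem reduces to the following regularization principle: for every compact CW pair $(L,K)$ and every continuous $H:L\times X\to Y$ whose slice $H(k,\cdot)$ is regular for each $k\in K$, there exists a homotopy rel $K\times X$ from $H$ to a slicewise regular map $\tilde H:L\times X\to Y$.

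\textbf{The affine case.} I would first establish the regularization principle when $Y=W$ is a Zariski open subset of some $\R^n$. Here $\mathcal{R}(X,\R^n)$ is a dense convex subspace of the normed space $\mathcal{C}(X,\R^n)$ by coordinatewise Stone--Weierstrass, and compactness of $L\times X$ together with openness of $W$ in $\R^n$ supplies an $\varepsilon>0$ such that every continuous map within $\varepsilon$ of $H$ still has image in $W$. By a Dugundji-style extension argument---triangulating $L$ finely so that $K$ is a subcomplex, assigning to each vertex outside $K$ a regular approximation of the corresponding $H$-value (and to each vertex $v\in K$ the actual value $H(v)$), and interpolating barycentrically using the fact that $\mathcal{R}(X,\R^n)$ is closed under convex combinations---one produces a continuous $H':L\to\mathcal{R}(X,\R^n)$ with $H'|_K=H|_K$ and $\|H'-H\|_\infty<\varepsilon/2$. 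The straight-line homotopy $(1-t)H+tH'$ then stays in $W$, is constant on $K\times X$, and deforms $H$ to the slicewise regular map $H'$.

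\textbf{The general case and the main obstacle.} To treat general uniformly retract rational $Y$, I would exploit the local retractions $V_j\xrightarrow{\rho_j}W_j\xrightarrow{\sigma_j}V_j$ from Definition \ref{1.1}: by compactness, cover $H(L\times X)$ by finitely many $V_j$, subdivide $(L,K)$ into closed cells $C$ each mapped by $H$ into a single $V_j$, and regularize cell by cell by replacing $H|_{C\times X}$ with $\sigma_j\circ H_C'$, where $H_C'$ is a slicewise-regular approximation of $\rho_j\circ H|_{C\times X}$ furnished by the affine case applied inside $W_j$. Because $\sigma_j\circ\rho_j=\mathrm{id}_{V_j}$ and $V_j$ is open in $Y$, sufficiently close approximations produce outputs close to $H$ and still inside $V_j$, so the application of $\sigma_j$ is legitimate. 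The hardest step is the inductive patching across cells, since partitions of unity destroy regularity in the $X$ variable; my proposed remedy is to process the cells one at a time in a chosen order, treating the union of already-regularized cells as part of the boundary constraint for each subsequent application of the affine case, and to use the cofibration property of $(L,K)$ together with a uniform $\varepsilon$-control to guarantee that the finitely many local approximations fit together into a single continuous, globally slicewise-regular map. Making this uniform control precise---so that each stage's approximation lies in the relevant $V_j$ and agrees with previous stages across cell boundaries---is where I expect the main technical work of the proof to reside.
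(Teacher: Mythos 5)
Your framing via the exponential law and slicewise regular representatives is exactly the paper's starting point, and your affine case is essentially sound (modulo the fact that barycentric interpolation from vertex values does not literally give $H'|_K=H|_K$ on cells contained in $K$; this needs a genuinely relative extension argument, which is fixable since $\mathcal{R}(X,\R^n)$ is convex and dense). But the step from the affine case to a general uniformly retract rational $Y$ contains a fatal gap. You propose to ``subdivide $(L,K)$ into closed cells $C$ each mapped by $H$ into a single $V_j$,'' i.e.\ $H(C\times X)\subset V_j$. This is impossible in general: the charts $V_j$ of Definition \ref{1.1} cover $Y$, and for a fixed parameter $c\in C$ the slice $H(c,\cdot):X\to Y$ can be surjective (already the base map $f$ may be), so no subdivision of $L$ alone can confine a whole slice to one $V_j$. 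Any genuine use of the local retractions forces a localization in the $X$ (equivalently $Y$) direction as well, and --- as you yourself observe --- regularity of a map $X\to Y$ cannot be patched over a cover of $X$ by partitions of unity. Your cell-ordering scheme only patches in the $L$ direction and therefore never confronts the actual difficulty. A second, independent problem: your regularization principle as stated for arbitrary compact CW pairs is false. Applied to $(L,K)=(S^0,\{1\})$ it would make $i_*$ surjective on $\pi_0$, which fails by Example \ref{1.7}; the principle can only hold when every slice $H(l,\cdot)$ is already homotopic to a regular map, and even then the known proofs need the stronger hypothesis that $H$ is homotopic to a regular map \emph{of the product} $L\times X\to Y$, which is why the paper works over contractible parameter spaces (balls and hemispheres) and needs a hemisphere-splitting trick for surjectivity.

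What the paper does instead is import two deep results of Banecki that precisely globalize the local retract-rationality data over all of $Y$ at once: Theorem \ref{2.2}, producing a \emph{strong dominating spray} $(n,M,N,\sigma,\tau)$, and Theorem \ref{2.3}, a relative Stone--Weierstrass theorem asserting that a continuous map $K\to Y$ homotopic to a regular map, and already regular on $K\cap Z$, can be approximated by regular maps rel $K\cap Z$. The spray is the device that replaces your straight-line homotopy in the nonaffine setting: given a regular approximation $\psi$ of $\varphi$, the formula $\sigma(\varphi(x),t\,\tau(\varphi(x),\psi(x)))$ connects them through slicewise regular maps (Lemma \ref{2.4}). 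These two theorems are the real content hiding behind your phrase ``where I expect the main technical work of the proof to reside''; they are not obtainable by the cell-by-cell covering argument you sketch, and without them (or an equivalent substitute) the proposal does not close.
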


It should be noted that Theorem \ref{1.2} for $k=0$ is included in \cite[Corollary 1.6]{2}. The following conclusion is a direct consequence of Theorem \ref{1.2}.

\begin{corollary}
\label{1.3}
    Under the same assumptions as in Theorem \ref{1.2}, the inclusion map 
    $$i:\mathcal{R}(X,Y)\hookrightarrow \C(X,Y)$$
    is a weak homotopy equivalence if and only if every continuous map from $X$ to $Y$ is homotopic to a regular map.
\end{corollary}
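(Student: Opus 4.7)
The plan is to unwind the definition of weak homotopy equivalence and observe that Theorem \ref{1.2} already supplies all of the required isomorphisms except surjectivity on $\pi_0$, which is exactly what the condition on the right-hand side expresses. Recall that a continuous map $f\colon A\to B$ is a weak homotopy equivalence precisely when the induced map $\pi_0(A)\to\pi_0(B)$ is a bijection and, for every basepoint $a\in A$ and every positive integer $k$, the induced map $\pi_k(A,a)\to\pi_k(B,f(a))$ is a group isomorphism.

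The forward direction is immediate: if $i$ is a weak homotopy equivalence then $\pi_0(i)\colon\pi_0(\mathcal{R}(X,Y))\to\pi_0(\mathcal{C}(X,Y))$ is surjective, which just says that every path component of $\mathcal{C}(X,Y)$ meets $\mathcal{R}(X,Y)$; equivalently, every continuous map from $X$ to $Y$ is homotopic to a regular map.

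For the converse, assume that every continuous map $X\to Y$ is homotopic to a regular map. Then $\pi_0(i)$ is surjective, and Theorem \ref{1.2} applied with $k=0$ tells us it is also injective, hence a bijection. For the higher homotopy groups, note that any basepoint for $\mathcal{R}(X,Y)$ is an element $f\in\mathcal{R}(X,Y)$, so Theorem \ref{1.2} directly gives that $i_{*}\colon\pi_k(\mathcal{R}(X,Y),f)\to\pi_k(\mathcal{C}(X,Y),f)$ is a group isomorphism for every $k\geq 1$. Both conditions in the definition of weak homotopy equivalence are thus satisfied.

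Since the whole argument is essentially a bookkeeping of what Theorem \ref{1.2} already provides, there is no genuine obstacle here; the only point to be careful about is that the definition of weak homotopy equivalence requires the isomorphism on $\pi_k$ only at basepoints lying in the source $\mathcal{R}(X,Y)$, so one never needs to consider $\pi_k$ of $\mathcal{R}(X,Y)$ based at a non-regular map.
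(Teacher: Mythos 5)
Your proof is correct and matches the paper's intent exactly: the paper gives no separate argument for Corollary \ref{1.3}, declaring it a direct consequence of Theorem \ref{1.2}, and your unwinding of the definition of weak homotopy equivalence (injectivity on $\pi_0$ plus isomorphisms on $\pi_k$ at regular basepoints from Theorem \ref{1.2}, with surjectivity on $\pi_0$ being precisely the stated density condition) is the intended bookkeeping.
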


Recall that an $n$-dimensional real algebraic variety is said to be \emph{uniformly rational} if each of its points has a Zariski open neighborhood biregularly isomorphic to a Zariski open subset of $\R^n$. Obviously, every uniformly rational variety is uniformly retract rational. The converse is not true in general. A counterexample is based on a rather sophisticated construction given in \cite{3}. The complex algebraic hypersurface $V$ in $\mathbb{C}^4$ described in \cite[p.315, Example 2]{3} is not rational. Since $V$ is defined over $\R$, the real algebraic hypersurface $V(\R):=V\cap \R^4$ in $\R^4$ is not rational. On the other hand, by \cite[p.299, Th\'{e}or\`{e}me 1']{3} there exists a nonempty Zariski open subset of $V(\R)\times \R^3$ that is biregularly isomorphic to a Zariski open subset of $\R^6$. Thus, some nonempty Zariski open subset $Y$ of $V(\R)$ is a uniformly retract rational real algebraic variety. Clearly, $Y$ is not uniformly rational. I am grateful to Olivier Benoist for bringing the relevant results of \cite{3} to my attention.

The following example illustrates the scope of applicability of Theorem \ref{1.2}.

\begin{example}
\label{1.4}
    Here are some uniformly rational varieties of interest in real algebraic geometry.
    \begin{enumerate}
        \item[(1)] (\textbf{Grassmannians}). Let $\mathbb{F}$ stand for $\R$, $\mathbb{C}$ or $\mathbb{H}$, where $\mathbb{H}$ is the skew field of quaternions. Denote by $\mathbb{G}_r(\mathbb{F}^n)$ the Grassmannian of $r$-dimensional $\mathbb{F}$-vector subspaces of $\mathbb{F}^n$, $0\leq r\leq n$. Then $\mathbb{G}_r(\mathbb{F}^n)$ considered as a real algebraic variety is uniformly rational \cite[pp. 77, 73, 352]{4}.
        \item[(2)] (\textbf{Unit spheres}). The unit $n$-sphere 
        $$\mathbb{S}^n:=\{ (x_0,\ldots,x_n)\in \R^{n+1}:\, x_0^2+\ldots+x_n^2=1 \}$$
        is uniformly rational because $\mathbb{S}^n$ with any point removed is biregularly isomorphic to $\R^n$ via an appropriate stereographic projection.
        \item[(3)] (\textbf{Some linear real algebraic groups}). A \emph{linear real algebraic group} is a Zariski closed subgroup of the general linear group $GL(n,\R)$, for some $n$. It is an open question whether every such group is a uniformly rational real algebraic variety. However, this is the case if $G$ is one of the groups $SO(n)$, $O(n)$, $SU(n)$ or $U(n)$ ($SU(n)$ and $U(n)$ are linear real algebraic groups, viewed as subgroups of $GL(2n,\R)$). Indeed, the Cayley transform
        $$A\mapsto (I-A)(I+A)^{-1}$$
        defines a biregular isomorphism from the tangent space $T_IG$ to $G$ at the identity matrix $I$ onto the Zariski open neighborhood
        $$G_I:=\{ Q\in G:\, \textnormal{det}(I+Q)\neq 0 \}$$
        of $I$ in $G$; here $T_ISO(n)=T_IO(n)$ is the $\R$-vector space of all real skew-symmetric $n\times n$ matrices, $T_IU(n)$ is the $\R$-vector space of all skew-Hermitian $n\times n$ matrices, and $T_ISU(n)$ is the $\R$-vector subspace of $T_IU(n)$ consisting of all matrices with trace $0$. The uniform rationality of $G$ follows since for every $P\in G$ the set $\{ PQ:\, Q\in G_I \}$ is a Zariski open neighborhood of $P$ biregularly isomorphic to $G_I$.
        \item[(4)] (\textbf{Real form of complex groups}). By Chevalley's theorem \cite[Corollary 2]{11}, every linear complex algebraic group $G\subset GL(n,\mathbb{C})$ has the property that its irreducible component containing the identity matrix is a complex rational variety. Thus, the image of $G$ under the standard embedding 
        $$GL(n,\mathbb{C})\rightarrow GL(2n,\R)$$
        is a linear real algebraic group which is a uniformly rational variety.
        \item[(5)] (\textbf{Blow-ups of uniformly rational varieties}). If $Y$ is a uniformly rational real algebraic variety and $Z\subset Y$ is a nonsingular Zariski closed subvariety, then the blow-up of $Y$ with center $Z$ is a uniformly rational variety. The proof given in \cite[p. 885]{14} and \cite{9} in a complex setting also works for real algebraic varieties.
    \end{enumerate}
\end{example}

By Example \ref{1.4}(1), both Theorem \ref{1.2} and Corollary \ref{1.3} hold for $Y=\mathbb{G}_r(\mathbb{F}^n)$, in which case these results are included in \cite[Theorem 1.1 and Corollary 1.2]{7}. The proof given in \cite{7} depends heavily on the theory of algebraic vector bundles and is not applicable to the present, more general context.

According to \cite[Example 1.3]{7}, for every nonnegative integer $m$ the inclusion map 
$$\mathcal{R}(\mathbb{S}^m,\mathbb{G}_r(\mathbb{F}^n))\hookrightarrow \C(\mathbb{S}^m,\mathbb{G}_r(\mathbb{F}^m))$$
is a weak homotopy equivalence. As shown below, the latter result can be extended to several new cases.

The next conclusion follows immediately from Corollary \ref{1.3} and Example \ref{1.4}(2).

\begin{corollary}
\label{1.5}
    Let $(m,n)$ be a pair of nonnegative integers. Then the inclusion map
    $$\mathcal{R}(\mathbb{S}^m,\mathbb{S}^n)\hookrightarrow \C(\mathbb{S}^m,\mathbb{S}^n)$$
    is a weak homotopy equivalence if and only if every continuous map from $\mathbb{S}^m$ to $\mathbb{S}^n$ is homotopic to a regular map.
\end{corollary}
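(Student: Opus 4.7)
The plan is to obtain the statement as a direct application of Corollary \ref{1.3}, taking $X=\mathbb{S}^m$ and $Y=\mathbb{S}^n$. To invoke that corollary I need to verify its two standing hypotheses: that the source variety is a compact real algebraic variety, and that the target is uniformly retract rational.

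First I would note that $\mathbb{S}^m\subset\R^{m+1}$ is cut out by the single polynomial equation $x_0^2+\cdots+x_m^2=1$, hence is an algebraic set in $\R^{m+1}$, and as a bounded closed subset of Euclidean space it is compact. Thus $\mathbb{S}^m$ qualifies as the source variety $X$ in Corollary \ref{1.3}.

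Next I would verify that $\mathbb{S}^n$ is uniformly retract rational. This is where Example \ref{1.4}(2) enters: stereographic projection from any point $p\in\mathbb{S}^n$ provides a biregular isomorphism between $\mathbb{S}^n\setminus\{p\}$ and $\R^n$, so $\mathbb{S}^n$ is uniformly rational. Since every uniformly rational variety is uniformly retract rational (take $V=W$ and both maps equal to the stereographic isomorphism in Definition \ref{1.1}), the variety $\mathbb{S}^n$ satisfies the hypothesis on $Y$ in Corollary \ref{1.3}.

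With both hypotheses in place, Corollary \ref{1.3} applied to the pair $(\mathbb{S}^m,\mathbb{S}^n)$ yields exactly the claimed biconditional: the inclusion $\mathcal{R}(\mathbb{S}^m,\mathbb{S}^n)\hookrightarrow\C(\mathbb{S}^m,\mathbb{S}^n)$ is a weak homotopy equivalence if and only if every continuous map $\mathbb{S}^m\to\mathbb{S}^n$ is homotopic to a regular map. There is no genuine obstacle in this argument; the entire content is packed into Theorem \ref{1.2} and the observation that spheres fall within its scope, so the proposal amounts to recording these two verifications and citing Corollary \ref{1.3}.
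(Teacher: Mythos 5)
Your proposal is correct and is exactly the paper's argument: the paper derives Corollary \ref{1.5} immediately from Corollary \ref{1.3} together with Example \ref{1.4}(2), which establishes the uniform rationality (hence uniform retract rationality) of $\mathbb{S}^n$ via stereographic projection. The only cosmetic slip is the phrase ``take $V=W$'' in checking Definition \ref{1.1}: $V$ is a Zariski open subset of $Y$ and $W$ one of $\R^n$, so they are biregularly isomorphic rather than equal, with the two required maps being the isomorphism and its inverse.
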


It is expected that every continuous map from $\mathbb{S}^m$ to $\mathbb{S}^n$ is homotopic to a regular map for all pairs $(m,n)$, but currently this is only proved for some pairs, e.g. those with $n\in \{ 1,2,4 \}$ \cite{5}, $n\in\{ 3,7 \}$ \cite{1}, and $m-n\leq 5$ \cite{1,6,18,19} (see also \cite{1,4} for other cases).

\begin{corollary}
\label{1.6}
    If $G$ is one of the groups $SO(n)$, $O(n)$, $SU(n)$ or $U(n)$, then for every nonnegative integer $m$ the inclusion map 
    $$\mathcal{R}(\mathbb{S}^m,G)\hookrightarrow \C(\mathbb{S}^m,G)$$
    is a weak homotopy equivalence.
\end{corollary}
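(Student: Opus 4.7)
The plan is to apply Corollary \ref{1.3}. By Example \ref{1.4}(3), each of $G \in \{SO(n), O(n), SU(n), U(n)\}$ is uniformly rational, hence uniformly retract rational in the sense of Definition \ref{1.1}. Since $\mathbb{S}^m$ is a compact real algebraic variety, Corollary \ref{1.3} applies with $X = \mathbb{S}^m$ and $Y = G$, and reduces the assertion to the claim that every continuous map $\mathbb{S}^m \to G$ is homotopic to a regular map.

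To establish this homotopy density, I would exploit the linear-algebraic structure of $G$, viewing it as a Zariski closed subgroup of some matrix space $M_N(\R)$ (with $N = n$ for $SO(n), O(n)$, and $N = 2n$ for $SU(n), U(n)$). The Cayley transform from Example \ref{1.4}(3) covers $G$ by Zariski open charts $\{PG_I\}_{P \in G}$, each biregularly isomorphic to the affine space $T_I G$. Given a continuous $f : \mathbb{S}^m \to G$, first smooth it, then approximate it uniformly in $M_N(\R)$ by a polynomial map $p : \mathbb{S}^m \to M_N(\R)$ via the Stone-Weierstrass theorem. For $p$ close enough to $f$, the image $p(\mathbb{S}^m)$ lies in a small Euclidean neighborhood of $G$. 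The goal is then to correct $p$ into a regular map $\tilde{p} : \mathbb{S}^m \to G$ that is still $C^0$-close to, and hence homotopic to, $f$.

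The main obstacle is this correction step, because regular partitions of unity are unavailable in the algebraic category. One strategy is to refine the approximation iteratively: at each stage, pick a finite subcover of $f(\mathbb{S}^m)$ by charts $P_i G_I$ and work chart-by-chart, pulling $p$ back to the vector space $T_I G$ via the inverse Cayley transform, polynomially correcting it there, pushing it forward to $G$, and patching via the group multiplication on $G$. An alternative, and cleaner, route is to invoke a standard density theorem for regular maps into a compact linear real algebraic group—results of this kind, established via Nash approximation or related techniques, directly yield the required homotopy and short-circuit the chart-by-chart gluing entirely.
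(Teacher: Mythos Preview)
Your reduction via Example~\ref{1.4}(3) and Corollary~\ref{1.3} is exactly the paper's argument. For the remaining step---that every continuous map $\mathbb{S}^m \to G$ is homotopic to a regular map---the paper simply invokes \cite[Theorem~1.5]{1}, which is precisely your ``alternative, cleaner route'' of citing a known density result.

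Your primary sketch, by contrast, does not close the gap you yourself flag. Stone--Weierstrass produces a polynomial map into $M_N(\R)$ landing near $G$, but there is no regular retraction of a Euclidean tube onto $G$ available (e.g.\ the polar-decomposition retraction onto $O(n)$ involves a matrix square root and is not regular). Your proposed chart-by-chart correction via Cayley charts then needs a way to glue the local regular pieces into a single global regular map $\mathbb{S}^m \to G$; the phrase ``patching via the group multiplication on $G$'' does not describe an actual construction, since multiplying two maps that are regular on overlapping pieces of $\mathbb{S}^m$ does not yield one map that is regular on the union and agrees with both. This gluing problem is genuinely nontrivial in the algebraic category and is exactly the content supplied by \cite[Theorem~1.5]{1}, so your direct approach would essentially have to reprove that result.
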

\begin{proof}
    Since by \cite[Theorem 1.5]{1} every continuous map from $\mathbb{S}^m$ to $G$ is homotopic to a regular map, it is sufficient to invoke Corollary \ref{1.3} and Example \ref{1.4}(3).
\end{proof}

Next, we show that in Examples \ref{1.5} and \ref{1.6} the unit sphere $\mathbb{S}^m$ cannot be replaced by an arbitrary compact nonsingular real algebraic variety.

\begin{example}
\label{1.7}
    Let $Y$ be a compact nonsingular real algebraic variety of dimension at least 1. By \cite[Theorem 1.1]{8}, we can choose a compact connected nonsingular real algebraic variety $X$ and a continuous map $f:X\rightarrow Y$ such that $f$ is not homotopic to any regular map from $X$ to $Y$.
\end{example}

Note that in Theorem \ref{1.2} the assumption on the target variety $Y$ cannot be weakened too much.

\begin{example}
\label{1.8}
    There exists a nonsingular real algebraic variety $Y$ diffeomorphic to $\mathbb{S}^2$ such that the homotopy groups 
    $$\pi_k(\mathcal{R}(\mathbb{S}^3,Y),f) \textnormal{ and } \pi_k(\C(\mathbb{S}^3,Y),f)$$
    are nonisomorphic for all $f\in \mathcal{R}(\mathbb{S}^3,Y)$ and all $k\geq 1$. Such a variety $Y$ can be constructed as follows.

    Every regular map from $\mathbb{S}^1$ to the curve 
    $$C:=\{ (x,y)\in\R^2:\, x^4+y^4=1 \}$$
    is constant (see e.g. \cite[Example 5.2.6]{17}), so for every integer $m\geq 1$ every regular map $\mathbb{S}^m\rightarrow C$ is constant. We can choose a nonsingular Zariski closed subvariety $Y$ of the Cartesian power $C^3$ that is diffeomorphic to $\mathbb{S}^2$ (this claim follows from \cite[Theorem 12.4.11]{4} since there exists a null homotopic $\C^{\infty}$ embedding of $\mathbb{S}^2$ into $C^3$). Obviously, every regular map $\mathbb{S}^m\rightarrow Y$ is constant, and hence the space $\mathcal{R}(\mathbb{S}^m,Y)$ is homeomorphic to $\mathbb{S}^2$. In particular, for $m=3$, we get
    $$\pi_k(\mathcal{R}(\mathbb{S}^3,Y),f)\cong\pi_k(\mathbb{S}^2)$$
    for all $f\in\mathcal{R}(\mathbb{S}^3,Y)$ and all $k\geq 1$. On the other hand, according to \cite[p. 48, Corollary]{15},
    $$\pi_k(\mathcal{C}(\mathbb{S}^3,Y),f)\cong \pi_k(\mathbb{S}^2)\oplus \pi_{k+2}(\mathbb{S}^2)$$
    for all $f\in\C(\mathbb{S}^3,Y)$ and all $k\geq 1$. Furthermore, the homotopy group $\pi_j(\mathbb{S}^2)$ is nontrivial for all $j\geq 2$ by \cite[Theorem 1.1]{16}. Thus $Y$ has the required properties.
\end{example}

In Example \ref{1.8}, we took advantage of the fact that the set of regular maps in the space of continuous maps can be very small. The latter phenomenon is quite common, as demonstrated by Ghiloni in \cite{13}.

Theorem \ref{1.2} follows from Theorem \ref{2.5} proved in Section \ref{2}. These  two results can be seen as real algebraic counterparts of some results of Gromov \cite{14} and Forstneri\v{c} \cite[Corollary 5.5.6]{12} on the relationship between the space of holomorphic maps and the space of continuous maps. The real algebraic case requires innovative methods.

\section{Properties of uniformly retract rational varieties}
\label{2}

Theorem \ref{1.2} is a special case of Theorem \ref{2.5} below. To prove Theorem \ref{2.5}, we need some highly nontrivial properties of uniformly retract rational varieties. These are taken from Banecki's paper \cite{2}.

\begin{definition}[see {\cite[Definition 2.1]{2}}]
\label{2.1}
    Let $Y$ be a real algebraic variety. A \emph{strong dominating spray} $(n,M,N,\sigma,\tau)$ for $Y$ consists of a nonnegative integer $n$, a Zariski open set $M\subset Y\times \R^n$ containing $Y\times\{0\}$, a Zariski open set $N\subset Y\times Y$ containing the diagonal $\triangle_Y$ of $Y\times Y$, and two regular maps 
    $$\sigma:M\rightarrow Y,\quad \tau:N\rightarrow\R^n$$
    satisfying the following conditions:
    \begin{enumerate}
        \item[(a)] $(y,\tau(y,z))\in M$ and $\sigma(y,\tau(y,z))=z$ for all $(y,z)\in N$,
        \item[(b)] $\tau(y,y)=0$ for all $y\in Y$.
    \end{enumerate}
\end{definition}

Note that conditions (a), (b) yield
$$\sigma(y,0)=y\, \textnormal{ for all } y\in Y.$$

\begin{theorem}[see {\cite[Theorem 2.5]{2}}]
\label{2.2}
    Every uniformly retract rational real algebraic variety admits a strong dominating spray.
\end{theorem}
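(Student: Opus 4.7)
The plan is to construct a global strong dominating spray on $Y$ by first building local ones from the retract-rational data and then gluing them by iterated composition. The basic observation is that whenever $V\subset Y$ is a Zariski open set carrying regular maps $\iota:V\to W\subset\mathbb{R}^n$ and $r:W\to V$ with $r\circ\iota=\mathrm{id}_V$, the formulas
$$\sigma(y,v)=r(\iota(y)+v),\qquad \tau(y,z)=\iota(z)-\iota(y)$$
define a strong dominating spray on $V$ with $M=\{(y,v)\in V\times\mathbb{R}^n:\iota(y)+v\in W\}$ and $N=V\times V$: condition (a) reduces to $r(\iota(z))=z$, and (b) is immediate. By the Noetherianity of the Zariski topology on the affine variety $Y$, the cover $\{V_y\}_{y\in Y}$ supplied by Definition \ref{2.1} admits a finite subcover $V_1,\ldots,V_k$; after a further refinement I may assume each $V_i=\{\phi_i\neq 0\}$ is a principal Zariski open set, with retract data $(\iota_i,r_i,W_i,n_i)$ and corresponding local spray $(\sigma_i,\tau_i)$.

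The substantive step is to extend each local spray $\sigma_i$ to a regular map $\widetilde\sigma_i$ defined on a Zariski open neighborhood of the entire zero section $Y\times\{0\}$ in $Y\times\mathbb{R}^{n_i}$, with $\widetilde\sigma_i(y,0)=y$ for every $y\in Y$. Writing $\iota_i=P_i/\phi_i^{a_i}$ and $r_i=R_i/S_i$ as ratios of polynomials and considering the reparametrization
$$\widetilde\sigma_i(y,v)=r_i\bigl(\iota_i(y)+\phi_i(y)^{m}v\bigr)$$
for a large integer $m$, one obtains after clearing denominators an expression of the form $P_{\mathrm{num}}(y,v)/P_{\mathrm{den}}(y,v)$ with both $P_{\mathrm{num}}$ and $P_{\mathrm{den}}$ polynomial in $(y,v)$. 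The crux is to choose $m$, together with the polynomial representatives of $\iota_i$ and $r_i$, so that $P_{\mathrm{den}}$ is nonvanishing at every $(y,0)\in Y\times\mathbb{R}^{n_i}$, including at points with $y\in Y\setminus V_i$. This is the main obstacle, because the denominator inherited from $r_i$ naturally acquires a factor of $\phi_i$ outside $V_i$, so a mere rescaling of $v$ by $\phi_i^m$ does not suffice; overcoming it requires exploiting the freedom in choosing the rational representation of $r_i$, and possibly introducing an auxiliary homogenizing coordinate so that the target ambient becomes $\mathbb{R}^{n_i+1}$, with the top-degree part of the denominator polynomial nonvanishing along the polynomial extension of $\iota_i$.

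Once every $\widetilde\sigma_i$ is built, I would set $n=n_1+\cdots+n_k$ and define the global spray $\sigma:M\to Y$ by the iterated composition
$$\sigma(y,v_1,\ldots,v_k)=\widetilde\sigma_k\bigl(\widetilde\sigma_{k-1}(\cdots\widetilde\sigma_1(y,v_1)\cdots,v_{k-1}),v_k\bigr).$$
Because each $\widetilde\sigma_i$ fixes $Y\times\{0\}$, this composite is regular on a Zariski open $M\supset Y\times\{0\}$ and satisfies $\sigma(y,0,\ldots,0)=y$; moreover, plugging in $v_j=0$ for all $j\neq i$ isolates the action of $\widetilde\sigma_i$ on the $i$-th slot. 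For the companion map $\tau:N\to\mathbb{R}^n$, I would take $N$ to be the Zariski open neighborhood $\bigcup_i(V_i\times V_i)$ of $\Delta_Y$, and on each piece $V_i\times V_i$ place a rescaled copy of $\iota_i(z)-\iota_i(y)$ in the $i$-th block of $\tau(y,z)$ with zeros elsewhere, so that $\sigma(y,\tau(y,z))=z$ holds by a telescoping calculation. Merging the partial definitions of $\tau$ on the overlaps into a single regular map is a secondary technical point, handled by a careful patching argument exploiting the principal-open structure of the $V_i$.
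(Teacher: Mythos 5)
First, a point of comparison: the paper does not prove this statement at all — Theorem \ref{2.2} is imported verbatim from Banecki \cite[Theorem 2.5]{2} — so your attempt has to be judged as a reconstruction of Banecki's argument rather than against an internal proof. Your local construction is correct: for a single chart $V\to W\to V$ with $r\circ\iota=\mathrm{id}_V$, the formulas $\sigma(y,v)=r(\iota(y)+v)$ and $\tau(y,z)=\iota(z)-\iota(y)$ do give a strong dominating spray for $V$, and passing to a finite cover by principal Zariski open sets is unobjectionable. The overall strategy of globalizing by composing rescaled local sprays is also the right one in spirit.

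However, the two steps that carry the entire weight of the theorem are exactly the ones you leave unresolved. (i) Extending $\sigma_i$ to a Zariski open neighborhood of the whole zero section $Y\times\{0\}$: after clearing denominators in $r_i\bigl(\iota_i(y)+\phi_i(y)^m v\bigr)$, the denominator at $v=0$ is a power of $\phi_i(y)$ times $S_i(\iota_i(y))$, which vanishes identically on $Y\setminus V_i$ no matter how large $m$ is. You acknowledge this and gesture at ``exploiting the freedom in the rational representation'' and an ``auxiliary homogenizing coordinate,'' but you do not exhibit a representation that works, and it is not evident that one exists without substantial further input; this is the crux, not a refinement. (ii) The map $\tau$ must be a single regular map on one Zariski open neighborhood $N$ of $\triangle_Y$. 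Your candidate assigns to $(y,z)\in V_i\times V_i$ the vector with $\tau_i(y,z)$ in the $i$-th block and zeros elsewhere; on an overlap $(V_i\cap V_j)\times(V_i\cap V_j)$ the two prescriptions are different regular maps, so they cannot simply be ``merged'' — there is no Zariski partition of unity, and the natural weighted averages such as $\phi_i^2/\sum_j\phi_j^2$ destroy the telescoping identity $\sigma(y,\tau(y,z))=z$. Calling this a ``secondary technical point'' inverts the actual difficulty: constructing a globally defined $\tau$ compatible with the composed spray is the technical core of Banecki's proof. As it stands, your proposal is a plausible outline with the two essential steps missing.
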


Prior to formulating the next theorem, some preparation is necessary. Let $X$, $Y$ be real algebraic varieties, $S$ an arbitrary subset of $X$, and $Z$ the Zariski closure of $S$ in $X$.

A map $\varphi:S\rightarrow Y$ is said to be \emph{regular} if there exists a regular (in the usual sense) map $\varphi_0:Z_0\rightarrow Y$ defined on a Zariski open neighborhood $Z_0$ of $S$ in $Z$ such that $\varphi_0|_S=\varphi$.

Denote by $\mathcal{R}(S,Y)$ the set of all regular maps from $S$ to $Y$. The set $\mathcal{R}(S,Y)$ is regarded as a topological subspace of the space $\C(S,Y)$ of all continuous maps from $S$ to $Y$ endowed with the compact-open topology. We say that a continuous map $f:S\rightarrow Y$ can be \emph{approximated in the compact-open topology by regular maps} if every neighborhood of $f$ in $\C(S,Y)$ contains a regular map.

\begin{theorem}[see {\cite[Theorem 1.3]{2}}]
\label{2.3}
    Let $Y$ be a uniformly retract rational real algebraic variety. Let $X$ be a real algebraic variety, $Z$ a Zariski closed subvariety of $X$, and $f:K\rightarrow Y$ a continuous map defined on a compact semialgebraic subset $K$ of $X$. Assume that $f$ is homotopic to a regular map from $K$ to $Y$, and the restriction $f|_{K\cap Z}$ is a regular map. Then $f$ can be approximated in the compact-open topology by regular maps from $K$ to $Y$ whose restrictions to $K\cap Z$ are equal to $f|_{K\cap Z}$.
\end{theorem}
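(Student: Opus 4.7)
The plan is to use the strong dominating spray provided by Theorem~\ref{2.2} to convert the approximation problem for $Y$-valued maps into a relative approximation problem for $\mathbb{R}^n$-valued maps on a compact semialgebraic set, and then to traverse the given homotopy in finitely many sufficiently small increments.

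\textbf{Setup.} Fix a strong dominating spray $(n,M,N,\sigma,\tau)$ for $Y$. Let $g_0 \colon K\to Y$ be a regular map homotopic to $f$ via a homotopy $F\colon K\times[0,1]\to Y$ with $F_0=g_0$ and $F_1=f$. I will first arrange (see the obstacle paragraph below) that $g_0|_{K\cap Z}=f|_{K\cap Z}$ and that $F$ is constant in $t$ on $K\cap Z$. Using compactness of $K\times[0,1]$, the fact that $N$ is an open neighborhood of $\triangle_Y$ in $Y\times Y$, and uniform continuity of $F$, choose a partition $0=t_0<t_1<\cdots<t_m=1$ so fine that $(F(x,t_i),F(x,t_{i+1}))\in N$ for all $x\in K$ and all $i$.

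\textbf{Inductive step.} Assume at stage $i$ a regular map $g_i\colon K\to Y$ has been constructed which approximates $F(\cdot,t_i)$ so sharply that $(g_i(x),F(x,t_{i+1}))\in N$ for all $x\in K$, and which still satisfies $g_i|_{K\cap Z}=f|_{K\cap Z}$. Define the continuous map
$$\tilde h_i\colon K\to \mathbb{R}^n, \qquad \tilde h_i(x):=\tau(g_i(x),F(x,t_{i+1})).$$
Because $F$ is constant in $t$ on $K\cap Z$, for $x\in K\cap Z$ one has $F(x,t_{i+1})=f(x)=g_i(x)$, and condition~(b) of Definition~\ref{2.1} yields $\tilde h_i|_{K\cap Z}=0$. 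Pick a regular approximation $\alpha_i\colon K\to\mathbb{R}^n$ of $\tilde h_i$ with $\alpha_i|_{K\cap Z}=0$, and set
$$g_{i+1}(x):=\sigma(g_i(x),\alpha_i(x)).$$
Since $\sigma(y,0)=y$, the map $g_{i+1}$ is regular and satisfies $g_{i+1}|_{K\cap Z}=f|_{K\cap Z}$; moreover, closeness of $\alpha_i$ to $\tilde h_i$ forces $g_{i+1}$ to be close to $F(\cdot,t_{i+1})$. After $m$ iterations $g_m$ is the desired regular approximation of $f$; a standard finite cascade of quantifier choices arranges the approximation to be as sharp as prescribed.

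\textbf{Main obstacles.} I expect the crux to be twofold. The first difficulty is the preparation that deforms $g_0$ to agree with $f$ on $K\cap Z$ through regular maps: I would handle this by running the unconstrained spray iteration above on the smaller compact semialgebraic set $K\cap Z$ to connect the regular maps $g_0|_{K\cap Z}$ and $f|_{K\cap Z}$ by a chain of close regular maps, and then extending each spray move from a Zariski open neighborhood of $Z$ in $X$ back to $K$ via $\sigma$ applied to a regular extension of the corresponding $\mathbb{R}^n$-valued correction. The second difficulty is the relative Stone--Weierstrass statement invoked at every inductive step: a continuous map $K\to\mathbb{R}^n$ vanishing on $K\cap Z$ must admit arbitrarily close regular approximations that also vanish on $K\cap Z$. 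I would derive this from the classical polynomial approximation theorem on compact semialgebraic sets by writing a first regular approximation as a sum of componentwise polynomials and then multiplying its restriction to $K\cap Z$ (which is small) by a regular function vanishing on $Z$, exploiting that the ideal of regular functions on the ambient affine variety vanishing on $Z$ is stable under multiplication by bounded regular functions.
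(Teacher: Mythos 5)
First, note that the paper offers no proof of Theorem~\ref{2.3}: it is imported verbatim from Banecki's paper \cite[Theorem 1.3]{2}, whose entire purpose is to establish this relative approximation statement. So you are attempting to reprove the main theorem of a cited paper, and your sketch should be judged on its own merits. The absolute part of your argument (the case $Z=\emptyset$) is sound and is indeed the standard mechanism in this area: subdivide the homotopy finely enough that consecutive stages are $N$-close, pull each increment back to an $\R^n$-valued map via $\tau$, approximate that by a regular map using Weierstrass, push forward with $\sigma$, and manage the finitely many error tolerances backwards. The points you need along the way all check out: $\alpha_i$ close to $\tilde h_i$ keeps $(g_i,\alpha_i)$ in the open set $M$ on the compact set $K$, condition (a) gives $\sigma(g_i,\tilde h_i)=F(\cdot,t_{i+1})$ exactly, and the relative Stone--Weierstrass statement for $\R^n$-valued maps vanishing on $K\cap Z$ is true (though your justification is garbled; the correct device is to multiply a polynomial approximant of the Tietze extension by $u/(u+\eta)$, where $u$ is a sum of squares of generators of the ideal of $Z$, not to ``multiply its restriction to $K\cap Z$ by a regular function vanishing on $Z$'').

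The genuine gap is the preparation step, which you defer to a paragraph of intentions, and which is precisely where the content of the theorem lies. The hypothesis only gives a homotopy from $f$ to \emph{some} regular map $g_0$, with no compatibility on $K\cap Z$; you must produce a regular $g_0$ with $g_0|_{K\cap Z}=f|_{K\cap Z}$ and a homotopy that is stationary there before your induction can start. Your proposed fix --- run the iteration on $K\cap Z$ and extend each correction $\alpha_i$ from $Z$ to $X$ --- breaks down because a regular extension $\tilde\alpha_i$ of $\alpha_i$ is completely uncontrolled off $Z$: on $K\setminus Z$ the pair $(g_i(x),\tilde\alpha_i(x))$ need not lie in $M$, so $\sigma(g_i,\tilde\alpha_i)$ need not even be defined, and damping $\tilde\alpha_i$ by a cutoff such as $\eta/(u+\eta)$ only controls it away from $Z$, not at points of $K$ that are Euclidean-close to $Z$ but far from $K\cap Z$, where $\tilde\alpha_i$ can still be large. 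Taming exactly this phenomenon is the technical heart of \cite{2}, and your sketch does not supply an argument for it. Until that step is carried out, the proof establishes only the non-relative approximation statement, not Theorem~\ref{2.3}.
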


The following lemma can be easily deduced from Theorems \ref{2.2} and \ref{2.3}.

\begin{lemma}
\label{2.4}
    Let $Y$ be a uniformly retract rational real algebraic variety. Let $X$ be a real algebraic variety and let $K$ be a compact semialgebraic subset of $X$. Let $l$ be a nonnegative integer, $P$ a compact semialgebraic subset of $\R^l$, and $Q$, $Q_0$ two Zariski closed subvarieties of $\R^l$ satisfying $Q_0\subset Q\subset P$. Let $\varphi:K\times P\rightarrow Y$ be a continuous map with the following properties:
    \begin{enumerate}
        \item[(i)] $\varphi$ is homotopic to a regular map $K\times P\rightarrow Y$,
        \item[(ii)] the restriction $\varphi|_{K\times Q_0}$ is a regular map,
        \item[(iii)] the map $\varphi(\cdot, q):K\rightarrow Y$, $x\mapsto \varphi(x,q)$ is regular for all $q\in Q$.
    \end{enumerate}
    Then there exist a regular map $\psi:K\times P\rightarrow Y$ and a continuous map $\Phi:K\times P\times [0,1]\rightarrow Y$ such that the following hold:
    \begin{enumerate}
        \item[(1)] $\psi|_{K\times Q_0}=\varphi|_{K\times Q_0}$,
        \item[(2)] $\Phi$ is a homotopy from $\varphi$ to $\psi$ relative to $K\times Q_0$,
        \item[(3)] the map $\Phi(\cdot,q,t):K\rightarrow Y$, $x\mapsto \Phi(x,q,t)$ is regular for all $(q,t)\in Q\times [0,1]$.
    \end{enumerate}
\end{lemma}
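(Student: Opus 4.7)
The plan is to apply Theorem~\ref{2.3} to approximate $\varphi$ by a regular map $\psi$ that already coincides with $\varphi$ on $K\times Q_0$, and then use a strong dominating spray on $Y$ (provided by Theorem~\ref{2.2}) to connect $\varphi$ to $\psi$ by a homotopy whose slice at every $(q,t)\in Q\times[0,1]$ is automatically regular.

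First I would fix a strong dominating spray $(n,M,N,\sigma,\tau)$ for $Y$ via Theorem~\ref{2.2}, and view $X\times\R^l$ as a real algebraic variety with $X\times Q_0$ as a Zariski closed subvariety and $K\times P$ as a compact semialgebraic subset. Since $Q_0\subset P$, the intersection $(K\times P)\cap(X\times Q_0)$ equals $K\times Q_0$, so hypotheses (i) and (ii) supply exactly what Theorem~\ref{2.3} requires. Next, after embedding $Y$ into some $\R^m$ and writing $d$ for the induced metric, I would use the compactness of $\varphi(K\times P)$, the Euclidean-openness of $M$ and $N$, and the continuity of $\tau$ with $\tau|_{\triangle_Y}=0$ to select an $\varepsilon>0$ such that $d(\psi(x,q),\varphi(x,q))<\varepsilon$ for all $(x,q)\in K\times P$ forces both $(\varphi(x,q),\psi(x,q))\in N$ and the entire segment $\{(\varphi(x,q),s\,\tau(\varphi(x,q),\psi(x,q))):s\in[0,1]\}\subset M$. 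Applying Theorem~\ref{2.3} within this $\varepsilon$-tolerance yields the desired regular $\psi$ with $\psi|_{K\times Q_0}=\varphi|_{K\times Q_0}$.

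With such a $\psi$ in hand, I would define
$$\Phi(x,q,t):=\sigma\bigl(\varphi(x,q),\,t\cdot\tau(\varphi(x,q),\psi(x,q))\bigr).$$
Continuity is clear, and the spray relations $\sigma(y,0)=y$, $\sigma(y,\tau(y,z))=z$ for $(y,z)\in N$, and $\tau(y,y)=0$ respectively yield $\Phi(\cdot,\cdot,0)=\varphi$, $\Phi(\cdot,\cdot,1)=\psi$, and $\Phi(x,q,t)=\varphi(x,q)$ on $K\times Q_0\times[0,1]$, establishing conclusions (1) and (2). For (3), fixing $(q,t)\in Q\times[0,1]$, the slice $\varphi(\cdot,q)$ is regular by (iii), $\psi(\cdot,q)$ is regular as a slice of the regular map $\psi$, and $\sigma,\tau$ are regular by Definition~\ref{2.1}, so $\Phi(\cdot,q,t)$ is a composition of regular maps $K\to Y$.

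The main obstacle will be the $\varepsilon$-calibration, specifically ensuring that the segment $s\,\tau(\varphi(x,q),\psi(x,q))$, $s\in[0,1]$, stays inside $M$ uniformly over $K\times P$ rather than merely at its endpoints $s=0,1$. This forces a uniform-continuity estimate on the compact set $\varphi(K\times P)$ prior to invoking Theorem~\ref{2.3}; once that estimate is in place, the remainder is a routine verification using the spray axioms.
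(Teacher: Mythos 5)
Your proposal is correct and follows essentially the same route as the paper: apply Theorem~\ref{2.3} (with ambient variety $X\times\R^l$ and Zariski closed subvariety $X\times Q_0$) to get $\psi$ agreeing with $\varphi$ on $K\times Q_0$, then use the strong dominating spray from Theorem~\ref{2.2} to define $\Phi(x,p,t)=\sigma(\varphi(x,p),t\tau(\varphi(x,p),\psi(x,p)))$. The only difference is that you spell out the compactness/uniform-continuity calibration guaranteeing the segment stays in $M$, which the paper compresses into the phrase ``if $\psi$ is sufficiently close to $\varphi$.''
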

\begin{proof}
    According to Theorem \ref{2.2}, $Y$ admits a strong dominating spray $(n,M,N,\sigma,\tau)$. Furthermore, by (i), (ii) and Theorem \ref{2.3}, there exists a regular map $\psi:K\times P\rightarrow Y$, arbitrarily close to $\varphi$ in the compact-open topology, with $\psi|_{K\times Q_0}=\varphi|_{K\times Q_0}$. In particular, (1) holds. Thus, in view of (iii) and Definition \ref{2.1}, if $\psi$ is sufficiently close to $\varphi$, we obtain a well-defined continuous map 
    $$\Phi:K\times P\times [0,1]\rightarrow Y,\, \Phi(x,p,t):=\sigma(\varphi(x,p),t\tau(\varphi(x,p),\psi(x,p)))$$
    satisfying (2) and (3).
\end{proof}

It is convenient to adopt the following convention. If $(T,t_0)$ is a pointed topological space and $k$ is a nonnegative integer, then each element of $\pi_k(T,t_0)$ is considered to be the homotopy class represented by a pointed continuous map $(\mathbb{S}^k,s_0)\rightarrow (T,t_0)$, where $s_0=(1,0,\ldots,0)\in\mathbb{S}^k$ (in particular, $s_0=1\in S^0=\{ 1,-1 \}$ for $k=0$). The neutral element of $\pi_k(T,t_0)$ is the homotopy class of the constant map sending $\mathbb{S}^k$ to $t_0$. We will regard $\mathbb{S}^k$ as the boundary of the Euclidean unit closed ball $\mathbb{B}^{k+1}$ in $\R^{k+1}$.

The main result of this section is the following generalization of Theorem \ref{1.2}.

\begin{theorem}
\label{2.5}
    Let $Y$ be a uniformly retract rational real algebraic variety. Let $X$ be a real algebraic variety, $K$ a compact semialgebraic subset of $X$, and
    $$i:\mathcal{R}(K,Y)\hookrightarrow\C(K,Y)$$
    the inclusion map. Then for every regular map $f\in \mathcal{R}(K,Y)$, the induced map
    $$i_{*}:\pi_k(\mathcal{R}(K,Y),f)\rightarrow \pi_k(\C(K,Y),f)$$
    is injective for $k=0$ and a group isomorphism for $k\geq 1$.
\end{theorem}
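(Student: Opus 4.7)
The plan is to apply Lemma~\ref{2.4} three times, with $P$ a closed Euclidean ball (or the interval $[0,1]$) and $Q_0, Q$ Zariski closed subvarieties on which $\varphi$ already has regular slices. The crucial hypothesis to verify each time is condition~(i): a global homotopy from $\varphi$ to a regular map on $K\times P$. I will arrange this uniformly by choosing $P$ so that it deformation-retracts onto a point where the map agrees with the regular basepoint $f$; a straight-line contraction then deforms $\varphi$ to the regular constant-in-$P$ map $(x,p)\mapsto f(x)$.

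For $\pi_0$ injectivity, given regular $f_0, f_1$ joined by a continuous $h:K\times[0,1]\to Y$, I apply Lemma~\ref{2.4} with $P=[0,1]$, $Q_0=Q=\{0,1\}$, $\varphi=h$: (i) holds via $(x,t,s)\mapsto h(x,st)$, and (ii),(iii) are immediate. The output $\psi$ matches $h$ on $K\times\{0,1\}$, giving a regular path from $f_0$ to $f_1$. For surjectivity on $\pi_k$, $k\ge 1$, I represent a class by $\gamma:(\mathbb{B}^k,\partial\mathbb{B}^k)\to(\C(K,Y),f)$, whose adjoint $\tilde{\gamma}:K\times\mathbb{B}^k\to Y$ equals $f$ on $K\times\partial\mathbb{B}^k$, and apply Lemma~\ref{2.4} with $P=\mathbb{B}^k$, $Q_0=Q=\partial\mathbb{B}^k=\mathbb{S}^{k-1}$. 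Contracting $\mathbb{B}^k$ onto a point of $\partial\mathbb{B}^k$ gives (i); (ii),(iii) are clear. The output $\psi$ with $\psi|_{K\times\partial\mathbb{B}^k}=f$ defines $\alpha(b):=\psi(\cdot,b)$, a pointed map $(\mathbb{B}^k,\partial\mathbb{B}^k)\to(\mathcal{R}(K,Y),f)$, and the homotopy $\Phi$ yields $i_*[\alpha]=[\gamma]$.

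For injectivity on $\pi_k$, $k\ge 1$, let $\alpha:(\mathbb{S}^k,s_0)\to(\mathcal{R}(K,Y),f)$ satisfy $i_*[\alpha]=0$. A pointed null-homotopy of $i\circ\alpha$ in $\C(K,Y)$ yields, via the cone construction $\mathbb{S}^k\times[0,1]\to\mathbb{B}^{k+1}$, a continuous extension $\hat{\alpha}:\mathbb{B}^{k+1}\to\C(K,Y)$ of $i\circ\alpha$ sending the segment from the centre to $s_0$ constantly to $f$; in particular $\hat{\alpha}(s_0)=f$. Its adjoint $\tilde{\hat{\alpha}}:K\times\mathbb{B}^{k+1}\to Y$ restricts to $\tilde{\alpha}$ on $K\times\mathbb{S}^k$ and to $f$ on $K\times\{s_0\}$. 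I apply Lemma~\ref{2.4} with $P=\mathbb{B}^{k+1}$, $Q_0=\{s_0\}$, $Q=\mathbb{S}^k$: contracting $\mathbb{B}^{k+1}$ to $s_0$ gives (i), and (ii),(iii) hold by construction. The regular output $\psi:K\times\mathbb{B}^{k+1}\to Y$ with $\psi(\cdot,s_0)=f$ then provides a pointed null-homotopy of $\alpha':=\psi|_{K\times\mathbb{S}^k}$ in $\mathcal{R}(K,Y)$ via $(s,t)\mapsto\psi(\cdot,(1-t)s+ts_0)$, while $\Phi|_{K\times\mathbb{S}^k\times[0,1]}$ is a pointed homotopy from $\alpha$ to $\alpha'$ in $\mathcal{R}(K,Y)$, so $[\alpha]=[\alpha']=0$.

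The main obstacle is condition~(i) of Lemma~\ref{2.4}: for a generic continuous $\varphi:K\times P\to Y$ representing a nontrivial free homotopy class, no regular map on $K\times P$ sits in that class. The resolution used in all three applications is to tie $P$ to the basepoint of $\pi_k$ via the ball/disk model, so that $\varphi$ can be straight-line contracted to the regular constant-$f$ map. A secondary point is the careful tracking of basepoints through the rel-$K\times Q_0$ clauses of Lemma~\ref{2.4} to ensure that the outputs give genuinely \emph{pointed} homotopies and null-homotopies in $\mathcal{R}(K,Y)$.
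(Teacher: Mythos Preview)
Your argument is correct. For injectivity (both $k=0$ and $k\ge 1$) your application of Lemma~\ref{2.4} with $P=\mathbb{B}^{k+1}$, $Q=\mathbb{S}^k$, $Q_0=\{s_0\}$ (or the interval variant for $k=0$) is exactly the paper's Step~1, so there is no real difference there.

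For surjectivity, however, you take a genuinely different and cleaner route. The paper works with the $(\mathbb{S}^k,s_0)$ model, where the domain $\mathbb{S}^k$ is not contractible; this forces a decomposition into two hemispheres $\mathbb{B}^k_{\pm}$, two separate applications of Theorem~\ref{2.3}, and the auxiliary extension and approximation Lemmas~\ref{2.6} and~\ref{2.7} to glue the hemispheric approximations compatibly. By switching to the $(\mathbb{B}^k,\partial\mathbb{B}^k)$ model you make the parameter space contractible from the outset, so condition~(i) of Lemma~\ref{2.4} is immediate via the straight-line contraction to a boundary point, and a \emph{single} invocation of Lemma~\ref{2.4} with $Q_0=Q=\mathbb{S}^{k-1}$ delivers both the regular representative $\psi$ and the rel-boundary homotopy $\Phi$ in one stroke. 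Your approach thus bypasses Lemmas~\ref{2.6} and~\ref{2.7} entirely; the trade-off is that the paper's hemisphere method stays within the $(\mathbb{S}^k,s_0)$ framework used uniformly in both steps, whereas you switch models between injectivity and surjectivity.
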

\begin{proof}
    For simplicity, we set $\mathcal{R}:=\mathcal{R}(K,Y)$ and $\C:=\C(K,Y)$. The proof consists of two steps in which $f\in \mathcal{R}$.
    \vspace{5pt}

    \noindent
    \emph{Step 1.} The map
    $$i_{*}:\pi_k(\mathcal{R},f)\rightarrow \pi_k(\C,f)$$
    is injective for $k\geq 0$.

    \begin{proof}[Proof of Step 1]
        Let $\alpha:K\times\mathbb{S}^k\rightarrow Y$ be a continuous map such that $\alpha(\cdot, u):K\rightarrow Y$ is a regular map for all $u\in\mathbb{S}^k$, and $\alpha(\cdot,s_0)=f$. Then $\alpha_{\mathcal{R}}:(\mathbb{S}^k,s_0)\rightarrow (\mathcal{R},f)$ defined by the formula 
    $$\alpha_{\mathcal{R}}(u)(x)=\alpha(x,u)\quad \textnormal{for all } x\in K \textnormal{ and } u\in \mathbb{S}^k$$
    is a continuous map of pointed spaces. Actually, each continuous map of pointed spaces $(\mathbb{S}^k,s_0)\rightarrow (\mathcal{R},f)$ is of the form $\alpha_{\mathcal{R}}$ for some $\alpha$ as above. Let $[\alpha_{\mathcal{R}}]$ denote the homotopy class in $\pi_k(\mathcal{R},f)$ represented by $\alpha_{\mathcal{R}}$ and assume that $i_{*}([\alpha_{\mathcal{R}}])$ is the neutral element of $\pi_k(\C,f)$. Our goal is to prove that $[\alpha_{\mathcal{R}}]$ is the neutral element of $\pi_k(\mathcal{R},f)$.

    By the assumption, $i\circ\alpha_{\mathcal{R}}$ can be extended to a continuous map from $\mathbb{B}^{k+1}$ to $\C$, so there exists a continuous extension $\varphi:K\times \mathbb{B}^{k+1}\rightarrow Y$ of $\alpha$. In particular, $\varphi(\cdot,u)=\alpha(\cdot,u)$ for all $u\in\mathbb{S}^k$. Moreover,
    $$K\times \mathbb{B}^{k+1}\times [0,1]\rightarrow Y,\quad (x,u,t)\mapsto \varphi(x,(1-t)u+ts_0)$$
    is a homotopy from $\varphi$ to the regular map 
    $$K\times \mathbb{B}^{k+1}\rightarrow Y,\quad (x,u)\mapsto f(x).$$
    Thus, according to Lemma \ref{2.4} (with $P=\mathbb{B}^{k+1}$, $Q=\mathbb{S}^k$, $Q_0=\{ s_0 \}$), there exist a regular map $\psi:K\times \mathbb{B}^{k+1}\rightarrow Y$ and a continuous map $\Phi:K\times \mathbb{B}^{k+1}\times [0,1]\rightarrow Y$ such that 
    \begin{itemize}
        \item $\psi(\cdot,s_0)=\varphi(\cdot,s_0)=f$,
        \item $\Phi$ is a homotopy from $\varphi$ to $\psi$ relative to $K\times \{s_0\}$,
        \item $\Phi(\cdot,u,t)$ is a regular map for all $(u,t)\in\mathbb{S}^k\times [0,1]$.
    \end{itemize}
    Now, let $\beta:=\psi|_{K\times \mathbb{S}^k}$. Since $\psi$ is a regular map, the map $\beta_{\mathcal{R}}:(\mathbb{S}^k,s_0)\rightarrow (\mathcal{R},f)$ has a continuous extension $\psi_{\mathcal{R}}:\mathbb{B}^{k+1}\rightarrow\mathcal{R}$ defined by the formula $\psi_{\mathcal{R}}(u)(x)=\psi(x,u)$ for all $x\in K$ and $u\in \mathbb{B}^{k+1}$, so the homotopy class $[\beta_{\mathcal{R}}]$ is the neutral element of $\pi_k(\mathcal{R},f)$. On the other hand, $\alpha_{\mathcal{R}}$ and $\beta_{\mathcal{R}}$ are homotopic as pointed maps via the homotopy $A:\mathbb{S}^k\times [0,1]\rightarrow \mathcal{R}$ defined by the formula 
    $$A(u,t)(x):=\Phi(x,u,t)\quad \textnormal{for all } x\in K \textnormal{ and } (u,t)\in\mathbb{S}^k\times [0,1].$$
    Thus the homotopy class $[\alpha_{\mathcal{R}}]$ is indeed the neutral element of $\pi_k(\mathcal{R},f)$ as we wanted to show.
    \end{proof}

    \noindent
    \emph{Step 2.} The map 
    $$i_{*}:\pi_k(\mathcal{R},f)\rightarrow\pi_k(\C,f)$$
    is surjective for $k\geq 1$.
    \vspace{10pt}

    \noindent
    \emph{Proof of Step 2.} Let $\mu:K\times \mathbb{S}^k\rightarrow Y$ be a continuous map with $\mu(\cdot,s_0)=f$. Then $\mu_{\C}:(\mathbb{S}^k,s_0)\rightarrow (\C,f)$ defined by the formula
    $$\mu_{\C}(u)(x)=\mu(x,u)\quad \textnormal{for all } x\in K \textnormal{ and } u\in\mathbb{S}^k$$
    is a continuous map of pointed spaces. Obviously, each continuous map of pointed spaces $(\mathbb{S}^k,s_0)\rightarrow (\C,f)$ is of the form $\mu_{\C}$ for some $\mu$ as above. Our aim is to prove that the homotopy class $[\mu_{\C}]\in\pi_k(\C,f)$ represented by $\mu_{\C}$ belongs to the image of $i_{*}$.

    Setting 
    $$\mathbb{B}^k_+:=\{ (u_0,\ldots, u_k)\in\mathbb{S}^k:\, u_k\geq 0 \},$$
    $$\mathbb{B}^k_-:=\{ (u_0,\ldots, u_k)\in \mathbb{S}^k:\, u_k\leq 0 \},$$
    we get $\mathbb{B}^k_+\cap\mathbb{B}^k_-=\mathbb{S}^{k-1}$, where $\mathbb{S}^{k-1}$ is identified with the subset 
    $$\{ (u_0,\ldots, u_k)\in \mathbb{S}^k:\, u_k=0 \}$$
    of $\mathbb{S}^k$. In addition, the canonical projection
    $$\pi:\mathbb{S}^k\rightarrow\mathbb{B}^k,\quad (u_0,\ldots, u_{k-1},u_k)\mapsto (u_0,\ldots, u_{k-1})$$
    has the property that the restrictions $\pi|_{\mathbb{B}^k_+}$ and $\pi|_{\mathbb{B}^k_-}$ are homeomorphisms.

    Note that $\mu|_{K\times \{s_0\}}$ is a regular map. Moreover, the restriction $\mu|_{K\times \mathbb{B}^k_+}$ is homotopic to the regular map 
    $$K\times \mathbb{B}^k_+\rightarrow Y,\quad (x,u)\mapsto f(x)$$
    via the homotopy
    $$K\times \mathbb{B}^k_+\times [0,1]\rightarrow Y,\quad (x,u,t)\mapsto \mu(x,(\pi|_{\mathbb{B}^k_+})^{-1}((1-t)\pi(u)+t\pi(s_0))).$$
    Thus, by Theorem \ref{2.3}, there exists a regular map $\mu_+:K\times \mathbb{B}^k_+\rightarrow Y$, arbitrarily close to $\mu|_{K\times \mathbb{B}^k_+}$ in the compact-open topology, such that $\mu_+|_{K\times \{s_0\}}=\mu|_{K\times \{s_0\}}$. In view of Lemma \ref{2.6} below, we can choose a continuous map $\theta:K\times \mathbb{S}^k\rightarrow Y$ that is close to $\mu$ in the compact-open topology and satisfies $\theta|_{K\times \mathbb{B}^k_+}=\mu_+$; in fact, $\theta$ can be chosen arbitrarily close to $\mu$ if $\mu_+$ is sufficiently close to $\mu|_{K\times \mathbb{B}^k_+}$. By construction, $\theta|_{K\times \mathbb{S}^{n-1}}=\mu_+|_{K\times \mathbb{S}^{n-1}}$, and the restriction $\theta|_{K\times \mathbb{B}^k_-}$ is homotopic to the regular map
    $$K\times \mathbb{B}^k_-\rightarrow Y,\quad (x,u)\mapsto f(x)$$
    via the homotopy
    $$K\times \mathbb{B}^k_-\times [0,1]\rightarrow Y,\quad (x,u,t)\mapsto \theta(x,(\pi|_{\mathbb{B}^k_-})^{-1}((1-t)\pi(u)+t\pi(s_0))).$$
    Hence, according to Theorem \ref{2.3}, there exists a regular map $\mu_-:K\times \mathbb{B}^k_-\rightarrow Y$, arbitrarily close to $\theta|_{K\times \mathbb{B}^k_-}$ in the compact-open topology, such that $\mu_-|_{K\times \mathbb{S}^{k-1}}=\theta|_{K\times \mathbb{S}^{k-1}}$. Since $\mu_+$ and $\mu_-$ agree on $K\times \mathbb{S}^{k-1}$, we get a continuous map $\lambda:K\times \mathbb{S}^k\rightarrow Y$ that is equal to $\mu_+$ and $K\times \mathbb{B}^k_+$ and equal to $\mu_-$ in $K\times \mathbb{B}^k_-$. In particular, $\lambda|_{K\times \{s_0\}}=\mu|_{K\times \{s_0\}}$ and $\lambda(\cdot, u):K\rightarrow Y$ is a regular map for all $u\in\mathbb{S}^k$. The construction yields $\lambda$ that is as close to $\mu$ in the compact-open topology as desired. Therefore, by Lemma \ref{2.7} below, $\lambda$ and $\mu$ are homotopic relative to $K\times \{s_0\}$, which implies $[\lambda_{\C}]=[\mu_{\C}]$ in $\pi_k(\C,f)$. The proof of Step 2 is complete since $i_{*}([\lambda_{\mathcal{R}}])=[\lambda_{\C}]$. This concludes the proof of Theorem \ref{2.5}.
\end{proof}

In the proof of Theorem \ref{2.5}, we used two simple topological facts given below. We include their proofs for completeness.

\begin{lemma}
\label{2.6}
    Let $C$ be a Hausdorff compact space, $Y$ a real algebraic variety, $f:C\rightarrow Y$ a continuous map, \textnormal{dist} a metric on $Y$ that induces the Euclidean topology, $\e>0$ a constant, and $D$ a closed subset of $C$. For these data, there exists a constant $\delta>0$ with the following property: If $\varphi:D\rightarrow Y$ is a continuous map satisfying 
    $$\textnormal{dist}(f(x),\varphi(x))<\delta\quad \textnormal{for all } x\in D,$$
    then there exists a continuous map $g:C\rightarrow Y$ such that $g|_{D}=\varphi$ and 
    $$\textnormal{dist}(f(x),g(x))<\e\quad \textnormal{for all } x\in C.$$
\end{lemma}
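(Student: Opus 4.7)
My plan is to exploit that $Y$, being affine, can be realized as a closed algebraic subset of some $\R^N$, hence as a closed semialgebraic set that is a Euclidean neighborhood retract: there exist an open neighborhood $U\supset Y$ in $\R^N$ and a continuous retraction $r\colon U\to Y$. (This is the standard tubular neighborhood when $Y$ is nonsingular; in general it is a known consequence of semialgebraic triangulability.) Equipped with such an $r$, the idea is to extend $\varphi$ coordinatewise to a nearby $\R^N$-valued map via Tietze, arrange that this extension stays inside $U$, and then push it back into $Y$ by $r$.

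To set up the quantitative estimate, I would first reduce to the ambient Euclidean metric. Since $f(C)$ is compact, its closed Euclidean $\rho$-neighborhood in $\R^N$ intersected with $Y$ is a compact subset $L\subset Y$, and on $L$ the metric \textnormal{dist} is uniformly equivalent to the restriction of the Euclidean norm. Shrink $\rho$ so that the Euclidean $2\rho$-tube $T$ about $f(C)$ lies in $U$, and then use uniform continuity of $r$ on the compact set $T$ to choose $\delta_1\in(0,\rho)$ for which any two points of $T$ at Euclidean distance less than $\delta_1$ are sent by $r$ to two points of $Y$ whose \textnormal{dist}-distance is less than $\e$. Finally, pick $\delta>0$ small enough that $\textnormal{dist}(\varphi(x),f(x))<\delta$ forces $\varphi(x)\in L$ and $|\varphi(x)-f(x)|<\delta_1/\sqrt{N}$ in the Euclidean norm.

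Given such a $\varphi$, apply the classical Tietze extension theorem to each of the $N$ coordinates of the difference $h:=\varphi-f|_D\colon D\to\R^N$, preserving the uniform bound $\delta_1/\sqrt{N}$ coordinatewise. This produces $\tilde h\colon C\to\R^N$ with $|\tilde h|<\delta_1$ everywhere. Then $\tilde\varphi:=f+\tilde h$ agrees with $\varphi$ on $D$ and maps $C$ into $T\subset U$, so $g:=r\circ\tilde\varphi\colon C\to Y$ is a continuous extension of $\varphi$ (using $r|_Y=\mathrm{id}$) whose \textnormal{dist}-distance to $f$ is bounded by $\e$ by the choice of $\delta_1$. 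The only real ingredient beyond routine topology is the existence of the Euclidean neighborhood retraction $r$ onto $Y$; in the nonsingular case this is automatic, and in general it is the main (but standard) input, with everything else reducing to Tietze extension and uniform continuity on compact sets.
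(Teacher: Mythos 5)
Your proof is correct and follows essentially the same route as the paper: embed $Y$ as a closed algebraic subset of Euclidean space, use that it is a neighborhood retract (the paper cites local contractibility plus Borsuk's theorem), extend via Tietze, and push back into $Y$ with the retraction. The only differences are cosmetic: you control the extension by applying the bound-preserving Tietze theorem to the difference $\varphi-f|_D$ and invoke uniform continuity of $r$ on a compact tube, whereas the paper extends $\varphi$ directly and then damps the extension with a Urysohn cutoff, arranging $\lVert y-r(y)\rVert<\e/2$ on the neighborhood; both yield the same estimate.
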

\begin{proof}
    We may assume that $Y$ is a Zariski closed subvariety of $\R^m$ for some $m$, and the metric dist is induced by a norm $||\cdot||$ on $\R^m$. Since $Y$ is a locally contractible space \cite[Corollary 9.3.7]{4}, by Borsuk's theorem \cite[Theorem E.3]{10} there exists a continuous retraction $r:W\rightarrow Y$ of an open neighborhood $W$ of $Y$ in $\R^m$. Shrinking $W$ if necessary, we get 
    $$||y-r(y)||<\frac{\e}{2}\quad \textnormal{for all } y\in W.$$
    Now, choose a constant $\delta$, with $0<\delta<\frac{\e}{2}$, such that the set 
    $$\{ y\in\R^m:\, ||f(x)-y||<\delta \textnormal{ for some } x\in C \}$$
    is contained in $W$, and let $\varphi:D\rightarrow Y$ be a continuous map satisfying 
    $$||f(x)-\varphi(x)||<\delta\quad \textnormal{for all } x\in D.$$
    According to Tietze's theorem, there exists a continuous extension $\psi:C\rightarrow\R^m$ of $\varphi$. If 
    $$U:=\{ x\in C:\, ||f(x)-\psi(x)||<\delta \},$$
    then $C\setminus U$ and $D$ are disjoint closed subsets of $C$, so by Urysohn's lemma we can choose a continuous function $\alpha:C\rightarrow[0,1]$ such that $\alpha=0$ on $C\setminus U$ and $\alpha=1$ on $D$. Defining the map $h:C\rightarrow \R^m$ by $h:=f+\alpha(\psi-f)$, we get 
    $$||f(x)-h(x)||=\alpha(x)||f(x)-\psi(x)||<\delta\quad \textnormal{for all } x\in C,$$
    so $h(C)$ is contained in $W$ and $g:=r\circ h$ is a well-defined continuous map from $C$ to $Y$. Clearly, $g|_D=\varphi$ and 
    $$||f(x)-g(x)||\leq ||f(x)-h(x)||+||h(x)-r(h(x))||<\delta+\frac{\e}{2}<\e$$
    for all $x\in C$, which completes the proof.
\end{proof}

\begin{lemma}
\label{2.7}
    Let $C$ be a compact space, $Y$ a real algebraic variety, $f:C\rightarrow Y$ a continuous map, and $D$ a subset of $C$. If $g:C\rightarrow Y$ is a continuous map sufficiently close to $f$ in the compact-open topology and if $g|_D=f|_D$, then $f$ is homotopic to $g$ relative to $D$.
\end{lemma}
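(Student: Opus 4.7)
The plan is to mimic the construction already used in the proof of Lemma \ref{2.6}: embed $Y$ in an ambient Euclidean space, use a Borsuk-type retraction to ``straighten'' the pointwise line segment from $f(x)$ to $g(x)$ into a path in $Y$, and observe that on $D$ this homotopy is automatically stationary because the two endpoints coincide there.

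First I would assume $Y\subset\R^m$ is Zariski closed and fix a norm $\|\cdot\|$ on $\R^m$ inducing the Euclidean metric. Since $Y$ is locally contractible (Corollary 9.3.7 of \cite{4}), Borsuk's theorem (Theorem E.3 of \cite{10}) furnishes an open neighborhood $W$ of $Y$ in $\R^m$ together with a continuous retraction $r:W\rightarrow Y$. Because $f(C)$ is compact and contained in the open set $W$, there exists a constant $\delta>0$ such that every point of $\R^m$ lying within distance $\delta$ of $f(C)$ belongs to $W$.

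Next, since $C$ is compact, the compact-open topology on $\C(C,Y)$ coincides with the topology of uniform convergence, so it is legitimate to interpret ``sufficiently close'' as
\[
    \sup_{x\in C}\,\|f(x)-g(x)\|<\delta.
\]
For such $g$, the straight-line segment $(1-t)f(x)+tg(x)$ remains within distance $\delta$ of $f(x)\in f(C)$, hence inside $W$, for every $(x,t)\in C\times[0,1]$. Therefore the formula
\[
    H(x,t):=r\bigl((1-t)f(x)+tg(x)\bigr)
\]
defines a continuous map $H:C\times[0,1]\rightarrow Y$. The endpoint conditions $H(\cdot,0)=f$ and $H(\cdot,1)=g$ follow since $r$ restricts to the identity on $Y$; and for $x\in D$ the hypothesis $g(x)=f(x)$ makes the line segment constantly equal to $f(x)\in Y$, yielding $H(x,t)=f(x)$ for all $t$. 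Hence $H$ is a homotopy from $f$ to $g$ relative to $D$.

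There is no substantive obstacle here; the only delicate point is the uniform bound produced by compactness of $f(C)$, which guarantees that a single $\delta$ works simultaneously for every $x\in C$ and thereby keeps the straight-line segments inside the retractable neighborhood $W$.
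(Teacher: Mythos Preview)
Your argument is correct and essentially identical to the paper's own proof: both embed $Y$ in $\R^m$, invoke a Borsuk retraction $r:W\to Y$, and define the homotopy by $r\bigl(f(x)+t(g(x)-f(x))\bigr)$, with the relative condition on $D$ following from $f|_D=g|_D$. You simply spell out in more detail why ``sufficiently close'' can be taken as a uniform $\delta$-bound keeping the segments inside $W$.
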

\begin{proof}
    As in the proof of Lemma \ref{2.6}, we may assume that $Y$ is a Zariski closed subvariety of $\R^m$ and and there exists a continuous retraction $r:W\rightarrow Y$ of an open neighborhood $W$ of $Y$ in $\R^m$. If $g:C\rightarrow Y$ is a continuous map sufficiently close to $f$ in the compact-open topology and if $g|_D=f|_D$, then 
    $$C\times [0,1]\rightarrow Y,\quad (x,t)\mapsto r(f(x)+t(g(x)-f(x)))$$
    is well-defined continuous map, which is the required homotopy.
\end{proof}

We conclude with the following observation.

\begin{corollary}
\label{2.8}
    Under the same assumptions as in Theorem \ref{2.5}, if $K$ is a contractible space, then the spaces $\mathcal{R}(K,Y)$ and $Y$ are weakly homotopy equivalent.
\end{corollary}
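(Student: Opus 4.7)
The plan is to exhibit a single weak homotopy equivalence $\mathrm{ev}_{\mathcal{R}}:\mathcal{R}(K,Y)\to Y$, namely evaluation at a fixed basepoint $k_0\in K$, and reduce its weak-equivalence property to Theorem \ref{2.5} plus the classical fact that the full mapping space out of a contractible compact domain has the homotopy type of its target. Let $i:\mathcal{R}(K,Y)\hookrightarrow\C(K,Y)$ be the inclusion, and write $\mathrm{ev}_{\C}:\C(K,Y)\to Y$ for the same evaluation formula on continuous maps, so that $\mathrm{ev}_{\mathcal{R}}=\mathrm{ev}_{\C}\circ i$. Let $s:Y\to\mathcal{R}(K,Y)$ send $y$ to the constant map $c_y$; this is continuous and lands in $\mathcal{R}(K,Y)$ because constant maps are regular.

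The first step is to check that $\mathrm{ev}_{\C}$ is an honest homotopy equivalence, with homotopy inverse $s$. Clearly $\mathrm{ev}_{\C}\circ s=\mathrm{id}_Y$. For the other direction, pick a contraction $H:K\times[0,1]\to K$ with $H(\cdot,0)=\mathrm{id}_K$ and $H(\cdot,1)\equiv k_0$. The formula $F(h,t)(k):=h(H(k,t))$ defines a continuous map $\C(K,Y)\times[0,1]\to\C(K,Y)$ (joint continuity in the compact-open topology is standard since $K$ is compact Hausdorff) connecting the identity to $s\circ\mathrm{ev}_{\C}$. In particular $(\mathrm{ev}_{\C})_{*}$ is an isomorphism on $\pi_k$ at every basepoint.

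Combining the two ingredients, every basepoint of $\mathcal{R}(K,Y)$ is by definition a regular map $f$, so Theorem \ref{2.5} applies and tells us that $i_{*}$ is injective on $\pi_0$ and an isomorphism on $\pi_k$ for $k\geq 1$. Composing with the isomorphism induced by $\mathrm{ev}_{\C}$ shows that
\[
(\mathrm{ev}_{\mathcal{R}})_{*}:\pi_k(\mathcal{R}(K,Y),f)\to\pi_k(Y,f(k_0))
\]
is an isomorphism for $k\geq 1$ and injective for $k=0$. Surjectivity of $(\mathrm{ev}_{\mathcal{R}})_{*}$ on $\pi_0$ is immediate from the section $s$: any $y\in Y$ is the image of $c_y\in\mathcal{R}(K,Y)$. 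Hence $\mathrm{ev}_{\mathcal{R}}$ is a weak homotopy equivalence, which is what was claimed.

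No individual step looks substantial once Theorem \ref{2.5} is in hand; the only thing to watch is the basepoint hypothesis in Theorem \ref{2.5}, but this is automatic here because every point of $\mathcal{R}(K,Y)$ is a regular map. The one routine verification worth performing carefully is the continuity of the homotopy $F$ in the compact-open topology, which is where compactness of $K$ (already part of the standing hypotheses) is used.
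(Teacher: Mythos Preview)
Your proposal is correct and follows essentially the same approach as the paper: both use Theorem~\ref{2.5} for the inclusion $i$ and the standard fact that evaluation $\C(K,Y)\to Y$ is a homotopy equivalence when $K$ is contractible. The only cosmetic difference is that you package the conclusion as a single explicit map $\mathrm{ev}_{\mathcal{R}}=\mathrm{ev}_{\C}\circ i$ and handle $\pi_0$-surjectivity via the section $s$, whereas the paper first observes that contractibility of $K$ makes $i_*$ surjective on $\pi_0$ (every continuous map is homotopic to a constant, hence regular, map) and then invokes the homotopy equivalence $\C(K,Y)\simeq\C(\{x_0\},Y)\cong Y$.
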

\begin{proof}
    Since $K$ is a contractible space, every continuous map from $K$ to $Y$ is homotopic to a constant map, so the inclusion map 
    $$\mathcal{R}(K,Y)\hookrightarrow\C(K,Y)$$
    is a weak homotopy equivalence by Theorem \ref{2.5}. Now it suffices to show that the spaces $\C(K,Y)$ and $Y$ are homotopy equivalent (which is the case if $Y$ is an arbitrary real algebraic variety with no additional requirements). If $x_0$ is a point in $K$, then the map 
    $$\C(K,Y)\rightarrow \C(\{x_0\},Y),\quad f\mapsto f|_{\{x_0\}}$$
    is a homotopy equivalence, while the spaces $\C(\{x_0\},Y)$ and $Y$ are homeomorphic. This completes the proof.
\end{proof}

\paragraph{Acknowledgements.} The author is indebted to Olivier Benoist for useful correspondence. Partial financial support was provided by the National Science Center (Poland) under grant no. 2022/47/B/ST1/00211.

\vspace{20pt}

\noindent
Institute of Mathematics\\
Faculty of Mathematics and Computer Science\\
Jagiellonian University\\
Łojasiewicza 6\\
30-348 Kraków\\
Poland\\
E-mail address: Wojciech.Kucharz@im.uj.edu.pl\\


\begin{thebibliography}{99}

    \bibitem{1} J. Banecki, Algebraic homotopy classes, J. Math. Pures Appl. 187 (2024), 45-57.

    \bibitem{2} J. Banecki, Relative Stone-Weierstrass theorem for mappings between varieties, arXiv: 2408.09233 (2024).

    \bibitem{3} A. Beauville, J.-L. Colliot-Th\'{e}l\`{e}ne, J.-J. Sansuc and P. Swinnerton-Dyer, Vari\'{e}t\'{e}s stablement rationnelles non rationnelles, Ann. of Math. 121 (1985), 283-318.

    \bibitem{4} J. Bochnak, M. Coste and M.-F. Roy, Real Algebraic Geometry, Ergeb. Math. Grenzgeb. Folge 3, vol. 36, Springer, Berlin, 1998.

    \bibitem{5} J. Bochnak and W. Kucharz, Algebraic approximation of mappings into spheres, Michigan Math. J. 34 (1987), 119-125.
    
    \bibitem{6} J. Bochnak and W. Kucharz, Realization of homotopy classes by algebraic mappings, J. Reine Angew. Math. 377 (1987), 159-169.

    \bibitem{7} J. Bochnak and W. Kucharz, The homotopy groups of some spaces of real algebraic morphisms, Bull. London Math. Soc. 25 (1993), 385-392.

    \bibitem{8} J. Bochnak and W. Kucharz, Real algebraic morphisms represent few homotopy classes, Math. Ann. 377 (2007). 909-921.

    \bibitem{9} F. Bogomolov and C. B\"{o}hning, On uniformly rational varieties, Topology, geometry, integrable systems, and mathematical physics, 234, 33-48, American Mathematical Society, 2014.

    \bibitem{10} G. E. Bredon, Topology and Geometry, GTM, vol. 139, Springer, New York, 1993.

    \bibitem{11} C. Chevalley, On algebraic group varieties, J. Math. Soc. Japan 6 (1954), 303-324.

    \bibitem{12} F. Forstneri\v{c}, Stein Manifolds and Holomorphic Mappings: The Homotopy Principle in Complex Analysis, second edition, Ergeb. Math. Grenzgeb. Folge 3, vol. 56, Springer, Cham, 2017.

    \bibitem{13} R. Ghiloni, On the space of morphisms into generic real algebraic varieties, Ann. Sc. Norm. Super. Pisa. Cl. Sci. (5) 5 (2006), 419-438.

    \bibitem{14} M. Gromov, Oka's principle for holomorphic sections of elliptic bundles, J. Amer. Math. Soc. 2 (1989), 851-897.

    \bibitem{15} V. L. Hansen, The homotopy type of the space of maps of a homotopy 3-sphere into the 2-sphere, Pacific J. Math. 76 (1) (1978), 43-49.

    \bibitem{16} S. O. Ivanov, R. Mikhailov and J. Wu, On nontriviality of certain homotopy groups of spheres, Homology, Homotopy Appl. 18 (2) (2016), 337-344.

    \bibitem{17} F. Mangolte, Real Algebraic Varieties, Springer Monographs in Matheatics, Springer International Publishing, 2020.

    \bibitem{18} J. Peng and Z. Tang, Algebraic maps from spheres to spheres, Sci. China Ser. A 42 (1999), 1147-1154.

    \bibitem{19} R. Wood, Polynomial maps from spheres to spheres, Invent. Math. 5 (1968), 163-168.
    
\end{thebibliography}
\end{document}